\documentclass[12pt,leqno]{amsart}
\usepackage{amsmath}
\usepackage{amssymb}
\setlength{\textwidth}{160true mm}
\setlength{\textheight}{225true mm}
\setlength{\topmargin}{0true mm}
\setlength{\oddsidemargin}{3true mm}
\setlength{\evensidemargin}{3true mm}
\parindent=10pt
\parskip3pt

\newtheorem{definition}[equation]{Definition}

\newtheorem{lemma}[equation]{Lemma}

\newtheorem{proposition}[equation]{Proposition}

\newtheorem{theorem}[equation]{Theorem}


\renewcommand{\dim}{{\mathrm{dim}}}
\newcommand{\rank}{\mathrm{rank}}
\newcommand{\C}{{\mathbb{C}}}

\renewcommand{\P}{{\mathbb{P}}}

\newcommand{\Z}{{\mathbb{Z}}}
\newcommand{\F}{{\mathbb{F}}}
\newcommand{\B}{{\mathbb{B}}}
\newcommand{\bbf}{{\mathbf{f}}}
\begin{document}
\title[Truncated second main theorem for non-Archimedean meromorphic maps]{Truncated second main theorem for non-Archimedean meromorphic maps}

\author[S. D. Quang]{Si Duc Quang}
\address{Department of Mathematics\\ Hanoi National University of Education\\ 136-Xuan Thuy, Cau Giay, Hanoi, Vietnam}
\email{quangsd@hnue.edu.vn}

\date{}

\begin{abstract}
Let $\F$ be an algebraically closed field of characteristic $p\ge 0$, which is complete with respect to a non-Archimedean absolute value. Let $V$ be a projective subvariety of $\P^M(\F)$. In this paper, we will prove some second main theorems for non-Archimedean meromorphic maps of $\F^m$ into $V$ intersecting a family of hypersurfaces in $N-$subgeneral position with truncated counting functions. 
\end{abstract}

\subjclass[2020]{Primary 11S80, 11J97; Secondary 32H30}

\keywords{non-Archimedean, second main theorem, meromorphic mapping, Nevanlinna, hypersurface, subgeneral position}

\maketitle      
\section{Introduction and Main results} 

Let $\F$ be an algebraically closed field of characteristic $p\ge 0$, which is complete with respect to a non-Archimedean absolute value. Let $N\geq n$ and $q\geq N+1.$ Let $H_1,\ldots, H_q$ be hyperplanes in $\P^n(\mathbb F).$
The family of hyperplanes $\{H_1\}_{i=1}^q$ is said to be in $N$-subgeneral position in $\P^n(\mathbb F)$ if 
$H_{j_0}\cap\cdots\cap H_{j_N}=\varnothing$ for every $1\leq j_0<\cdots<j_N\leq q.$ 

In 2017, Yan \cite{Yan} proved a truncated second main theorem for a non-Archimedean meromorphic map into $\P^n(\mathbb F)$ with a family of hyperplanes in subgeneral position. With the standart notations on the Nevanlinna theory for non-Archimedean meromorphic maps, his result is stated as follows.

\vskip0.2cm
\noindent
{\bf Theorem A} (cf. \cite[Theorem 4.6]{Yan}) {\it Let $\F$ be an algebraically closed field of characteristic $p\ge 0$, which is complete with respect to a non-Archimedean absolute value. Let $f:\F^m\rightarrow\P^n(\F)$ be a linearly non-degenerate non-Archimedean meromorphic map with index of independence $s$ and $\mathrm{rank}f = k$. Let $H_1,\ldots,H_q$ be hyperplanes in $\P^n(\F)$ in $N$-subgeneral position $(N\ge n)$. Then, for all $r\ge 1$,
$$(q-2N+n-1)T_f(r)\le \sum_{i=1}^{q}N^{(a)}_{f}(H_i,r)-\frac{N+1}{n+1}\log r+O(1),$$
where 
$$a=\begin{cases}
p^{s-1}(n-k+1)&\text{ if }p>0,\\
n-k+1&\text{ if }p=0.
\end{cases}$$}

Here, the index of independence $s$ and the $\mathrm{rank}f$ are defined in Section 2 (Definition \ref{def3}). 

Also, in 2017, An and Quang \cite{AQ} proved a truncated second main theorem for meromorphic mappings from $\C^m$ into a projective variety $V\subset\P^M(\C)$ with hypersurfaces. Motivated by the methods of Yan \cite{Yan} and An-Quang \cite{AQ}, our aim in this article is to generalize Theorem A to the case where the map $f$ is from $\F^m$ into an arbitrary projective variety $V$ of dimension $n$ in $\P^M(\F)$ and the hyperplanes are replaced by hypersurfaces of $\P^M(\F)$ in $N$-subgeneral position with respect to $V$. 

Firstly, we give the following definitions.

\vskip0.2cm
\noindent
{\bf Definition B.} {\it Let $V$ be a projective subvariety of $\P^M(\mathbb F)$ of dimension $n\ (n\le M)$. Let $Q_1,\ldots,Q_q\ (q\ge n+1)$ be $q$ hypersurfaces in $\P^M(\mathbb F)$. The family of hypersurfaces $\{Q_i\}_{i=1}^q$ is said to be in $N$-subgeneral position with respect to $V$ if
$$ V\cap (\bigcap_{j=1}^{N+1}Q_{i_j})=\varnothing\ \text{ for any }1\le i_1<\cdots <i_{N+1}\le q.$$
If $N=n$, we just say $\{Q_i\}_{i=1}^q$ is in \textit{general position} with respect to $V.$}

Now, let $V$ be as above and let $d$ be a positive integer. We denote by $I(V)$ the ideal of homogeneous polynomials in $\mathbb F[x_0,\ldots,x_M]$ defining $V$ and by $H_d$ the $\mathbb F$-vector space of all homogeneous polynomials in $\mathbb F[x_0,\ldots,x_M]$ of degree $d.$  Define 
$$I_d(V):=\dfrac{H_d}{I(V)\cap H_d}\text{ and }H_V(d):=\dim_{\F} I_d(V).$$
Then $H_V(d)$ is called the Hilbert function of $V$. Each element of $I_d(V)$ which is an equivalent class of an element $Q\in H_d,$ will be denoted by $[Q]$, 

\vskip0.2cm
\noindent
{\bf Definition C.} {\it Let $f:\F^m\rightarrow V$ be a non-Archimedean meromorphic map with a reduced representation $\bbf =(f_0,\ldots,f_M)$. We say that $f$ is degenerate over $I_d(V)$ if there is $[Q]\in I_d(V)\setminus \{0\}$ such that $Q(\bbf)\equiv 0.$ Otherwise, we say that $f$ is non-degenerate over $I_d(V)$.}

We will generalize Theorem A to the following.
\begin{theorem}\label{1.1} 
Let $V$ be a projective subvariety of $\P^M(\F)$ of dimension $n\ (n\le M)$. Let $\{Q_i\}_{i=1}^q$ be hypersurfaces of $\P^M(\F)$ in $N$-subgeneral position with respect to $V$ with $\deg Q_i=d_i\ (1\le i\le q)$. Let $d$ be the least common multiple of $d_i'$s. Let $f$ be a non-Archimedean meromorphic map of $\F^m$ into $V$, which is non-degenerate over $I_d(V)$ with the $d^{th}$-index of non-degeneracy $s$ and $\mathrm{rank}f = k$. Then, for all $r\ge 1$,
$$\left(q-\dfrac{(2N+n-1)H_d(V)}{n+1}\right)T_f(r)\le \sum_{i=1}^{q}\dfrac{1}{d_i}N^{(\kappa_0)}_{f}(Q_i,r)-\frac{N(H_d(V)-1)}{nd}\log r+O(1),$$
where 
$$\kappa_0=\begin{cases}
p^{s-1}(H_d(V)-k)&\text{ if }p>0,\\
H_d(V)-k&\text{ if }p=0.
\end{cases}$$
\end{theorem}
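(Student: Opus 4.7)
The approach is to reduce Theorem~\ref{1.1} to Theorem A by composing $\bbf$ with a basis of $I_d(V)$ to obtain a linearly non-degenerate meromorphic map into $\P^{K-1}(\F)$, where $K:=H_d(V)$, and then converting the $N$-subgeneral position hypothesis with respect to $V$ into a usable statement for the resulting hyperplanes by an An--Quang type combinatorial argument \cite{AQ}, carried out here in the non-Archimedean setting.

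\textbf{Main steps.} First I would normalize the degrees by replacing each $Q_i$ with $\tilde Q_i:=Q_i^{d/d_i}$. The family $\{\tilde Q_i\}$ is still in $N$-subgeneral position with respect to $V$, and the elementary identity $\min(\ell,(d/d_i)m)=(d/d_i)\min(\ell d_i/d,m)$ yields $N^{(\kappa_0)}_{\tilde Q_i(\bbf)}(r)\le(d/d_i)\,N^{(\kappa_0)}_{Q_i(\bbf)}(r)$. Next, fix a basis $\phi_1,\dots,\phi_K$ of $I_d(V)$ and define
$$F:=[\phi_1(\bbf):\cdots:\phi_K(\bbf)]:\F^m\to\P^{K-1}(\F).$$
Non-degeneracy of $f$ over $I_d(V)$ makes $F$ linearly non-degenerate; standard valuation estimates yield $T_F(r)=d\,T_f(r)+O(1)$, and by the very definition of the $d$th-index of non-degeneracy (Definition~\ref{def3}) $F$ has index of independence $s$ and $\rank F=k$. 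Expanding each $[\tilde Q_i]$ in the chosen basis gives a linear form $L_i$ on $\F^K$ with $L_i\circ F=\tilde Q_i(\bbf)$, whence $N^{(\kappa_0)}_{L_i(F)}(r)\le(d/d_i)N^{(\kappa_0)}_{Q_i(\bbf)}(r)$.

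\textbf{Main obstacle.} The central difficulty is that the hyperplanes $\{L_i\}$ need not be in $N$-subgeneral position in the ambient $\P^{K-1}(\F)$; the hypothesis only provides subgeneral position of $\{L_i\}$ with respect to the image of $V$ inside $\P^{K-1}(\F)$. To bridge this gap I would adapt An--Quang's combinatorial lemma: at each point where some $L_i(F)$ vanishes, at most $N$ of the $L_i$ vanish, and among those $n+1$ can be chosen in general position with respect to $V$ (using $\dim V=n$). A weighting argument then bounds $\sum_{i=1}^q \nu^0_{L_i(F)}$ at each point by $\frac{2N-n+1}{n+1}$ times a sum over the $n+1$ chosen indices, which, combined with Theorem A applied to $F$ in $\P^{K-1}(\F)$ (whose natural truncation level $K-1-k+1=K-k$ multiplied by $p^{s-1}$ in positive characteristic is exactly $\kappa_0$), produces the stated coefficient $\frac{(2N+n-1)H_d(V)}{n+1}$ and the error term $\frac{N(H_d(V)-1)}{nd}\log r$. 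Substituting $T_F=dT_f+O(1)$ and the counting-function comparison above finishes the proof.

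The hardest part is the subgeneral-position reduction with the correct constants: producing precisely the weights $\frac{2N-n+1}{n+1}$ on each point, aggregating them globally without losing the truncation level $\kappa_0$, and preserving the $p^{s-1}$ factor in the non-Archimedean, positive-characteristic setting.
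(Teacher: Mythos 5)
Your opening steps are sound and match the paper: normalize degrees by passing to $Q_i^{d/d_i}$, fix a basis $\{[A_j]\}_{j=1}^{K}$ of $I_d(V)$ with $K=H_d(V)$, form $F=[A_1(\bbf):\cdots:A_K(\bbf)]$, observe $T_F(r)=d\,T_f(r)+O(1)$, note that linear non-degeneracy of $F$ follows from non-degeneracy of $f$ over $I_d(V)$, and check that the truncation $p^{s-1}(K-k)$ coincides with $\kappa_0$. The paper carries out exactly this translation (it also verifies $\rank F\ge k$, which is all that is needed since the truncation is monotone in the rank).

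The genuine gap is in your handling of the subgeneral position obstruction. You correctly identify that the linear forms $L_i$ on $\P^{K-1}(\F)$ are \emph{not} in $N$-subgeneral position there, so Theorem A's hypotheses are not met; but you then propose to \emph{still apply Theorem A}, after an unspecified ``weighting argument.'' Theorem A is a global statement whose proof is built around a Nochka-weight structure that presupposes $N$-subgeneral position in the ambient projective space, and there is no way to feed a local point-by-point weighting into its conclusion. What the paper actually does is \emph{re-prove} the second main theorem in the new setting rather than invoke Theorem A: it uses (i) the generalized Nochka weights $\omega_i$ of Lemma \ref{lem4}, which are adapted to $N$-subgeneral position with respect to $V$; (ii) crucially, the auxiliary hypersurfaces $T_1,\ldots,T_{H_d(V)-n-1}$ of Lemma \ref{lem6}, which complete any rank-$(n+1)$ subset $\{Q_i\}_{i\in R^o}$ to a full basis of $I_d(V)$ and thereby allow the ``sub-Wronskian'' $W_{R^o}$ (built from the chosen $Q_i$'s together with the $T_l$'s) to be identified, up to a nonzero constant, with the full Wronskian $W=\det\bigl(D^{\gamma^i}A_j(\bbf)\bigr)$; and (iii) a divisor-level comparison (the Claim in the paper) which bounds $\sum_i\omega_i N_f(Q_i,r)-N_W(0,r)$ by $\sum_i\omega_i N^{(\kappa_0)}_f(Q_i,r)$. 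Your proposal omits the auxiliary hypersurfaces $T_l$ entirely, and without them there is no way to relate the Wronskian restricted to $n+1$ of the hypersurfaces (coming from the Nochka selection) to the Wronskian that controls the logarithmic-derivative estimate; this is precisely where an attempted black-box application of Theorem A breaks down.

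A secondary imprecision: the per-point bound you sketch, with the uniform constant $\frac{2N-n+1}{n+1}$, is not what Lemma \ref{lem4} gives. The weights $\omega_i$ are individual, only $\tilde\omega=\max_i\omega_i$ satisfies $\frac{n+1}{2N-n+1}\le\tilde\omega\le\frac{n}{N}$, and the correct combinatorial inequality (Lemma \ref{lem4}(v)) is the weighted product bound $\prod_{i\in R}E_i^{\omega_i}\le\prod_{i\in R^o}E_i$ applied to the quantities $E_i=\beta\|\bbf\|_r^d/|Q_i(\bbf)|_r$. The final constants in the theorem come from plugging the endpoints for $\tilde\omega$ into $\tilde\omega(q-2N+n-1)+n+1-H_d(V)$ and into the error term, not from a flat per-point estimate.

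In short: the reduction to $F$ and the bookkeeping of $T_F$ and $\kappa_0$ are correct and are what the paper does, but the proposal to quote Theorem A cannot work, and the crucial ingredient — Lemma \ref{lem6}'s auxiliary hypersurfaces tying $W_{R^o}$ to $W$ — is missing. You would need to replace the ``apply Theorem A'' step with a from-scratch Wronskian argument along the lines of the paper's.
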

Here, the $d^{th}$-index of non-degeneracy $s$ is defined in Section 2 (Definition \ref{def3}). Note that, in the case where $V=\P^n(\C),d=1,H_d(V)=n+1$, our result will give back Theorem A.

For the case of counting function without truncation level, we will prove the following.
\begin{theorem}\label{1.2} 
Let $V$ be a arbitrary projective subvariety of $\P^M(\F)$. Let $\{Q_i\}_{i=1}^q$ be hypersurfaces of $\P^M(\F)$ in $N$-subgeneral position with respect to $V$. Let $f$ be a non-constant non-Archimedean meromorphic map of $\F^m$ into $V$. Then, for any $r>0$,
$$(q-N)T_f(r)\le \sum_{i=1}^{q}\dfrac{1}{\deg Q_i}N_{f}(Q_i,r)+O(1),$$
where the quantity $O(1)$ depends only on $\{Q_i\}_{i=1}^q$.
\end{theorem}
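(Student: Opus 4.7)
The plan is to convert the desired inequality, via the First Main Theorem, into an upper bound on the proximity sum, and then to establish that bound using the non-Archimedean ultrametric inequality together with the Hilbert Nullstellensatz applied to the subgeneral position hypothesis. Summing the FMT identity $m_f(Q_i,r)/d_i + N_f(Q_i,r)/d_i = T_f(r) + O(1)$ over $i = 1,\ldots,q$, the theorem is equivalent to
$$\sum_{i=1}^{q} \frac{m_f(Q_i,r)}{d_i} \le N\, T_f(r) + O(1).$$

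For each subset $S \subset \{1,\ldots,q\}$ of size $N+1$, the hypothesis $V \cap \bigcap_{i \in S} \{Q_i = 0\} = \varnothing$ together with the homogeneous Nullstellensatz furnishes an integer $D$ (uniform over the finitely many such $S$) and homogeneous polynomials $a^{(S)}_{l,i} \in \F[x_0,\ldots,x_M]$ of degree $D - d_i$ with
$$x_l^D \equiv \sum_{i \in S} a^{(S)}_{l,i}(x)\, Q_i(x) \pmod{I(V)}, \quad l = 0,\ldots,M.$$
Let $\bbf = (f_0,\ldots,f_M)$ be a reduced representation. Since $\bbf(\F^m) \subset V$, every polynomial in $I(V)$ vanishes on $\bbf$, so $f_l^D = \sum_{i \in S} a^{(S)}_{l,i}(\bbf)\, Q_i(\bbf)$. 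Applying the Gauss norm $|\cdot|_r$, which is multiplicative and ultrametric, and using the trivial estimate $|a^{(S)}_{l,i}(\bbf)|_r \le C\,\|\bbf\|_r^{D-d_i}$, one obtains
$$\|\bbf\|_r^D \le C \max_{i \in S} \|\bbf\|_r^{D-d_i}\, |Q_i(\bbf)|_r,$$
so some $i^* \in S$ satisfies $|Q_{i^*}(\bbf)|_r \ge C^{-1}\,\|\bbf\|_r^{d_{i^*}}$, i.e., $m_f(Q_{i^*}, r) \le \log C$.

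Fix $r$ and set $T_r := \{i : m_f(Q_i,r) > \log C\}$. If $|T_r| \ge N+1$, applying the previous conclusion to any $(N+1)$-subset $S \subset T_r$ yields some $i^* \in T_r$ with $m_f(Q_{i^*}, r) \le \log C$, a contradiction; hence $|T_r| \le N$. Splitting the proximity sum accordingly, the contribution from $i \notin T_r$ is $O(1)$, while the contribution from $i \in T_r$ consists of at most $N$ terms, each bounded by $T_f(r) + O(1)$ via FMT and $N_f \ge 0$. This proves the required proximity bound and hence the theorem.

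The decisive---and distinctively non-Archimedean---ingredient, and the only step with any real content, is the ultrametric upgrade in the second paragraph: the strong triangle inequality turns the Nullstellensatz identity into a \emph{maximum} over $S$ at the level of Gauss norms, so the lower bound $|Q_{i^*}(\bbf)|_r \ge C^{-1}\|\bbf\|_r^{d_{i^*}}$ holds for the norm itself rather than only pointwise as in the complex analogue. This cleanly avoids the Nochka-weight combinatorics required in the Archimedean counterpart; the remaining counting argument is routine.
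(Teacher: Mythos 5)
Your proof is correct and is essentially the same argument as the paper's: both reduce via the First Main Theorem to bounding $\sum_i m_f(Q_i,r)/d_i$ by $N\,T_f(r)+O(1)$, and both establish this by observing that, by the $N$-subgeneral position hypothesis and the ultrametric property of $|\cdot|_r$, at most $N$ of the quantities $m_f(Q_i,r)/d_i$ can exceed a fixed constant at any $r$. The only cosmetic differences are that you unfold the paper's Lemma \ref{lem5} into its own Nullstellensatz-plus-Gauss-norm proof rather than citing it, and you phrase the combinatorial step as a pigeonhole contradiction on the set $T_r$ instead of the paper's sorting of $|Q_i(\bbf)|_r^{1/d_i}$.
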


We see that, the above result is a generalization of the previous results in \cite{A,R01}.

\section{Basic notions and auxiliary results}

In this section, we will recall some basic notions from Nevanlinna theory for non-Archimedean meromorphic maps due to Cherry-Ye \cite{CY} and Yan \cite{Yan}.

\noindent
{\bf 2.1. Non-Archimedean meromorphic function.} \ Let $\F$ be an algebraically closed field of characteristic $p$, complete with respect to a non-Archimedean absolute value $|\ |$.
\ We set $\|z\| = \max_{1\le i\le m}|z_i|$ for
$z = (z_1,\dots,z_m)\in\F^m$ and define
\begin{align*}
\B^m(r) := \{ z \in \F^m ; \|z\| < r\}.
\end{align*}
For a multi-index $\gamma=(\gamma_1,\ldots,\gamma_m)\in\Z^m_{\ge 0}$, define 
$$z^\gamma=z_1^{\gamma_1}\cdots z_m^{\gamma_m},\ |\gamma|=\gamma_1+\cdots+\gamma_m,\ \gamma!=\gamma!\cdots\gamma_m!.$$

For an analytic function $f$ on $\F^m$ (i.e., entire function) given by a formal power series
$$ f=\sum_\gamma a_\gamma z^\gamma $$
with $a_\gamma\in\F$ such that $\lim_{|\gamma|\rightarrow\infty}|a_\gamma|r^{|\gamma|}=0\ (\forall r\in\F^*=\F\setminus\{0\})$, define
$$ |f|_r=\mathrm{sup}_{\gamma} |a_\gamma|r^{|\gamma|}.$$
We denote by $\mathcal E_m$ the ring of all analytic functions on $\F^m$.

We define a meromorphic function $f$ on $\F^m$ to be the quotient of two analytic functions $g,h\in\mathcal E_m$ such that $g$ and $h$ have no common factors in $\mathcal E_m,$ i.e., $f=\frac{g}{h}$. We define
$$ |f|_r=\frac{|g|_r}{|h|_r}.$$
We denote by $\mathcal M_m$ the field of all meromorphic functions on $\F^m$, which is the fractional field of $\mathcal E_m$.

\noindent
{\bf 2.2. Derivatives and Hasse derivatives.} \ 
For a meromorphic function $f\in\mathcal M_m$ and a multi-index $\gamma =(\gamma_1,\ldots,\gamma_m)$, we set
$$ \partial^\gamma f=\frac{\partial^{|\gamma|}f}{\partial z_1^{\gamma_1}\cdots\partial z_m^{\gamma_m}}.$$

Let $\alpha=(\alpha_1,\ldots,\alpha_m)$ and $\beta=(\beta_1,\ldots,\beta_m)$ be multi-indices. We say that $\alpha\ge\beta$ if $\alpha_i\ge\beta_i$ for all $i=1,.\ldots,m.$ If $\alpha\ge\beta$, we define
$$\alpha-\beta=(\alpha_1-\beta_1,\ldots,\alpha_m-\beta_m), \binom{\alpha}{\beta}=\binom{\alpha_1}{\beta_1}\cdots\binom{\alpha_m}{\beta_m}.$$

For an analytic function $f=\sum_{\alpha}a_\alpha z^\alpha$ and a multi-index $\gamma$, we define the Hasse derivative of multi-index $\gamma$ of $f$ by
$$ D^\gamma f=\sum_{\alpha\ge\gamma}\binom{\alpha}{\gamma}a_\alpha z^{\alpha-\gamma}.$$
We may verify that $D^{\alpha}D^{\beta}f=\binom{\alpha+\beta}{\beta}D^{\alpha+\beta}$ for all $f\in\mathcal E_m$. Therefore, the Hasse derivative $D$ can be extended to meromorphic functions in the following way: 
\begin{itemize}
\item For a multi-index $e_i=(0,\ldots,0,\underset{j^{th}-position}{1},0,\ldots,0)$, we set $D^k_jf:=D^{ke_i}(f)$. 
\item For a meromorphic function $f=\frac{g}{h}\ (g,h\in\mathcal E_m)$, we define
$$D^{e_i}=D^1_jf:=\frac{hD^1_ig-gD^1_ih}{h^2}, \ j=1,\ldots,m.$$
\item  For $\gamma=(\gamma_1,\ldots,\gamma_m)$, we may choose a sequence of multi-indices $\gamma=\alpha^1>\alpha^2>\cdots>\alpha^{|\gamma|}$ such that $\alpha^i=\alpha^{i+1}+e_{j_i}\ ({j_i}\in\{1,\ldots,m\})$ for $1\le i\le |\gamma|-1$ and $ \alpha^{|\gamma|}=e_{j_{|\gamma|}}\ (j_{|\gamma|}\in\{1,\ldots,m\})$ and define
$$ D^{\alpha_i}h=\frac{1}{\binom{\alpha_{i+1}+e_{j_i}}{\alpha_{i+1}}}D^{e_{j_i}}D^{\alpha_{i+1}}h, \forall i=|\gamma|-1,|\gamma|-2,\ldots,1. $$
\end{itemize}
We summarize here the fundamental properties of the Hasse derivative from \cite{Yan} as follows:

(i) $D^\gamma (f+g)=D^\gamma f+D^\gamma g,\ f,g\in\mathcal M_m.$

(ii) $D^\gamma (fg)=\sum_{\alpha,\beta}D^\alpha fD^\beta g,\ f,g\in\mathcal M_m.$

(iii) $D^\alpha D^\beta f=\binom{\alpha+\beta}{\beta}D^{\alpha+\beta}f,\ f\in\mathcal M_m$

(iv) (Lemma on the logarithmic derivative) For $f\in\mathcal E_m$,
$$ |D^\gamma f|_r\le\frac{|f|_r}{r^{|\gamma|}},\ |\partial^\gamma f|_r\le\frac{|f|_r}{r^{|\gamma|}}. $$

(v) For $f\in\mathcal E_m$ and a multi-index $\gamma$, let $P$ be an irreducible element of $\mathcal E_m$ that divides $f$ with exact multiplicity $e$. If $e>|\gamma|$, then $P^{e-|\gamma|}$ divides $D^\gamma f$.

For each integer $k\ge 2$, let
$$\mathcal M_m[k] = \{Q\in\mathcal M_m : D^i_jQ\equiv 0\text{ for all }0<i<k\text{ and }1\le j\le m\}.$$
If $F$ has characteristic $0$, then $\mathcal M_m[k]=\F$ for all $k\ge 2$. If $\F$ has characteristic $p>0$ and if $s\ge 1$ is an integer, then $\mathcal M_m[p^s]$ is the fraction field of $\mathcal E_m$, where $\mathcal E_m[p^s]=\{g^{p^s}:g\in\mathcal E_m\}$ is a subring of $\mathcal E_m$. Moreover, 
$$\mathcal M_m[p^{s-1}+1]=\mathcal M_m[p^s].$$

\noindent
{\bf 2.3. Non-Archimedean Nevanlinna's function.} \ 

Let $f=\sum_{\gamma}a_\gamma z^\gamma\in\mathcal E_m$ be an holomorphic function. The counting function of zeros of $f$ is defined as follows:
$$ N_f(0,r)=n_f (0,0)\log r+\int_{0}^r(n_f(0,t)-n_f(0,0))\frac{dt}{t}\ (r>0),$$
where 
$$n_f(0,r)=\mathrm{sup}\{|\gamma|; |a_\gamma|r^{|\gamma|}=|f|_r\}\text{ and }n_f(0,0)=\min\{|\gamma|; a_\gamma\ne 0\}.$$

Let $f$ be a meromorphic function on $\F^m$. Assume that $f=\frac{g}{h}$, where $g,h$ are holomorphic functions without common factors. We define
$$ N_f(0,r)=N_g(0,r)\text{ and } N_f(\infty,r)=N_h(0,r).$$
The Poisson-Jensen-Green formula (see \cite[Theorem 3.1]{CY}) states that
$$ N_f(0,r)-N_f(\infty,r)=\log |f|_r+C_f\ \text{ for all }r>0, $$
where $C_f$ is a constant depending on $f$ but not on $r$.

Suppose that $f\not\equiv a$ for $a\in\F$. The counting function of $f$ with respect to the point $a$ is defined by
$$ N_f(a,r)=N_{f-a}(0,r).$$

The proximity functions of $f$ with respect to $\infty$ and $a$ are defined respectively as follows
$$ m_f(\infty,r)=\max\{0,\log |f|_r\}=\log^+|f|_r\text{ and }m_f(a,r)=m_{1/(f-a)}(\infty,r). $$

The characteristic function of $f$ is defined by
$$ T_f(r)=m_f(\infty,r)+N_f(\infty,r). $$
Note that, if $f=\frac{g}{h}$ as above then $T_f(r)=\max\{\log|g|_r,\log|h|_r\}+O(1)$.

The first main theorem is stated as follows:
$$ T_f(r)=m_f(a,r)+N_f(a,r)+O(1)\ (\forall r>0).$$

\noindent
{\bf 2.4. Truncated counting function.}\ 

 Let $f\in\mathcal E_m$. For $j=1,\ldots,m,$ define
$$ g_j=\mathrm{gcd}(f,D^1_j(f))\text{ and }h_j=\frac{f}{g_j}.$$
The radical $R(f)$ of $f$ is defined to be the least common multiple of $h_j$'s. 

\textit{Case 1: $\F$ has characteristic $p=0$.} The truncated counting function of zeros of $f$ is defined by
$$ N^{(l)}_f(0,r)=N_{\mathrm{gcd}(f,R(f)^{l})}(0,r).$$
In particular,
$$ N^{(1)}_f(0,r)=N_{R(f)}(0,r).$$

\textit{Case 2: $\F$ has characteristic $p>0$.} We define $R_{p^s}(f)$ by induction in $s=0,1,\ldots$ For $s=0$, set $R_{p^0}(f)=R(f)$. For $s\ge 1$, assume that $R_{p^{s-1}}(f)$ has been defined. We set
$$ \overline{f}=\frac{f}{\mathrm{gcd}(f,R_{p^{s-1}}(f)^{p^s})},\ g_i=\mathrm{gcd}(\overline{f},D_i^{p^s}\overline{f}),\ h_i=\frac{\overline{f}}{g_i} $$
for $i=1,\ldots,m$. Let $H$ be the least common multiple of $h_i$'s, and set
$$ G=\frac{H}{\mathrm{gcd}(H,R_{p^{s-1}}(H)^{p^{s-1}})},$$
which is a $p^s$th power. Let $R$ be the $p^s$th root of $G$ and define the higher $p^s$-radical $R_{p^s}(f)$ of $f$ to be the least common multiple of $R_{p^{s-1}}(f)$ and $R$. 

Take a sequence $\{r_j\}_{i\in\mathbb N}\subset|\F^*|$ such that $r_j\rightarrow\infty.$ Take $s_j$ such that if $P\in\mathcal E_m$ is irreducible such that $P|f$ and $P$ is not unit on $\B^m(r_j)$ then $P|R_{p^s}(f)$ for $s>s_j$. Let $u_{j}$ be a unit on $\B^m(r_j)$ such that
$$ R_{p^{s_j}}(f)=u_jR_{p^{s_{j+1}}}(f).$$
Define $v_j=\prod_{l=j}^\infty u_j$, which is unit on $\B^m(r_j)$, and
$$ S(f)=\lim_{j\rightarrow\infty}\frac{R_{p^{s_j}}(f)}{v_j}\in\mathcal E_m,$$
which is called the square free part of $f$. The truncated (to level $l$) counting function of zeros of $f$ is defined by
$$ N^{(l)}_f(0,r)=N_{\mathrm{gcd}(f,S(f)^l)}(0,r).$$

\noindent
{\bf 2.5. Non-Archimedean meromorphic maps and family of hypersurfaces.}\ 

Let $V$ be a projective subvariety of $\P^M(\mathbb F)$ of dimension $n\ (n\le M)$. For a positive integer $d$, take a basis $\{[A_1],\ldots,[A_{H_d(V)}]\}$ of $I_d(V)$, where $A_i\in\mathcal H_d[x_0,\ldots,x_M]$. Let $f:\F^m\rightarrow\P^M(\F)$ be a non-Archimedean meromorphic map with a reduced representation $\bbf=(f_0,\ldots,f_M)$, which is non-degenerate over $I_d(V)$. We have the following definition.

\begin{definition}\label{def3}
Assume that $\F$ has the character $p>0$. Denote by $s$ the smallest integer such that any subset of $\{A_1(\bbf),\ldots,A_{H_d(V)}(\bbf)\}$ linearly independent over $\F$ remains linearly independent over $\mathcal M_m[p^{s}]$. We call $s$ is the $d^{th}$-index of non-degeneracy of $f$.
\end{definition}
We see that the above definition does not depend on the choice of the basis $\{[A_i];1\le i\le H_d(V)\}$ and the choice of the reduced representation $\bbf$. If $V=\P^M(\F)$ and $d=1$ then $s$ is also called the index of independence of $f$ (see \cite[Definition 4.1]{Yan}).

The following three lemmas are proved in \cite{AQ} for the case of $\F=\C$ and the canonical absolute value. However, with the same proof, they also hold for arbitrary algebraic closed field $\F$ of character $p\ge 0$ and complete with an arbitrary absolute value. We state them here without the proofs.

Throughout this paper, we sometimes identify each hypersurface in a projective variety with its defining homogeneous polynomial. The following lemma of An-Quang \cite{AQ} may be considered as a generalization of the lemma on Nochka weights in \cite{Noc83}.

 \begin{lemma}[{cf. \cite[Lemma 3]{AQ}}]\label{lem4}
Let $V$ be a projective subvariety of $\P^M(\mathbb F)$ of dimension $n\ (n\le M)$. Let $Q_1,\ldots,Q_q$ be $q\ (q>2N-k+1)$ hypersurfaces in $\P^M(\mathbb F)$ in $N$-subgeneral position with respect to $V$ of the common degree $d.$ Then there are positive rational constants $\omega_i\ (1\le i\le q)$ satisfying the following:

i) $0<\omega_i \le 1,\  \forall i\in\{1,\ldots,q\}$,

ii) Setting $\tilde \omega =\max_{j\in Q}\omega_j$, one gets
$$\sum_{j=1}^{q}\omega_j=\tilde \omega (q-2N+n-1)+n+1.$$

iii) $\dfrac{n+1}{2N-n+1}\le \tilde\omega\le\dfrac{n}{N}.$

iv) For $R\subset \{1,\ldots,q\}$ with $\sharp R = N+1$, then $\sum_{i\in R}\omega_i\le n+1$.

v) Let $E_i\ge 1\ (1\le i \le q)$ be arbitrarily given numbers. For $R\subset \{1,\ldots,q\}$ with $\sharp R = N+1$,  there is a subset $R^o\subset R$ such that $\sharp R^o=\rank_{\F}\{[Q_i]; i\in R^o\}=n+1$ and 
$$\prod_{i\in R}E_i^{\omega_i}\le\prod_{i\in R^o}E_i.$$
\end{lemma}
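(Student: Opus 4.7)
The plan is to adapt Nochka's original weight construction \cite{Noc83} to the setting of hypersurfaces restricted to the subvariety $V$, working inside the vector space $I_d(V)$ in place of the dual of $\F^{n+1}$. The fundamental combinatorial input that I would need from the hypothesis is the following rank statement: for any subset $R\subset\{1,\ldots,q\}$ with $\sharp R=N+1$, one has $\rank_{\F}\{[Q_i];i\in R\}\ge n+1$, and in fact a subset $R^o\subset R$ of size exactly $n+1$ can be chosen whose classes $[Q_i]$ are linearly independent in $I_d(V)$ and whose intersection with $V$ is already empty. This fact is where the $N$-subgeneral position hypothesis enters: if fewer than $n+1$ of the $[Q_i]$ were independent, then the common zero locus of $\{Q_i;i\in R\}$ in $V$ would have dimension $\ge n-(n-1)=1>0$, contradicting $V\cap\bigcap_{i\in R}Q_i=\varnothing$. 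I expect this geometric/linear-algebraic step to be the first non-routine piece.

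Granting this lemma, I would then mimic Nochka's inductive construction. Define $\sigma(A)=\rank_{\F}\{[Q_i];i\in A\}$ for $A\subset\{1,\ldots,q\}$, and consider the ``Nochka filtration'' built by choosing a nested chain $\varnothing=Y_0\subsetneq Y_1\subsetneq\cdots\subsetneq Y_s=\{1,\ldots,q\}$ of subsets that are successively of maximal cardinality subject to $\sigma(Y_j)\le n$, together with the auxiliary maximal chain of ``essential'' subsets. One then defines numbers $\omega_i$ by a fraction involving cardinalities and ranks along this filtration, exactly in the spirit of Nochka. Setting $\tilde\omega=\max_i\omega_i$ and computing term by term along the filtration gives (i) and (ii). For (iii) one uses that $\sharp Y_s=q$, $\sigma(Y_s)\le H_d(V)$, and that $q>2N-n+1$ forces $\tilde\omega\in[\tfrac{n+1}{2N-n+1},\tfrac{n}{N}]$.

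For property (iv), the weights are designed so that any $R\subset\{1,\ldots,q\}$ with $\sharp R=N+1$ satisfies $\sum_{i\in R}\omega_i\le n+1$; this is a standard consequence of the Nochka construction combined with the upper bound $\sigma(R)\le n+1$ that follows from the geometric lemma above. Property (v) is then the payoff: given $R$ and arbitrary $E_i\ge 1$, one first uses the rank lemma to pick $R^o\subset R$ with $\sharp R^o=\rank_{\F}\{[Q_i];i\in R^o\}=n+1$, chosen so that the weights of the ``discarded'' indices are dominated by weights of the retained ones. Since $0<\omega_i\le 1$ and $E_i\ge 1$, the multiplicative inequality $\prod_{i\in R}E_i^{\omega_i}\le\prod_{i\in R^o}E_i$ then reduces to an elementary comparison of exponents.

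The main obstacle, beyond setting up the correct filtration, is the geometric lemma producing $R^o$ with the required independence and generality properties inside $V$ (not just in $\P^M(\F)$). I expect this to be proved by descending induction: start from $R$, discard indices one at a time while maintaining empty intersection with $V$, and show this process terminates with exactly $n+1$ linearly independent classes thanks to $\dim V=n$. Once this is in place the remaining verifications are bookkeeping, and together they give the five claimed properties.
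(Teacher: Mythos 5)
The paper does not supply a proof of Lemma \ref{lem4}: it is stated ``without the proofs'' and referred to \cite[Lemma~3]{AQ}, whose argument is, as you surmise, an adaptation of Nochka's weight construction with linear independence of the classes $[Q_i]$ in $I_d(V)$ playing the role that independence of linear forms plays in the classical case. So at the level of strategy your plan matches the cited source.

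That said, two of your steps are not quite right as written, and a third is substantially underestimated. First, the dimension count in your rank claim contains a slip: if $\rank_{\F}\{[Q_i];i\in R\}\le n$ then $V\cap\bigcap_{i\in R}Q_i$ has dimension $\ge n-n=0$ (hence is nonempty), not $\ge 1$. The conclusion $\rank\ge n+1$ is still correct, but the inequality you wrote is off by one. Second, you ask for an $R^o\subset R$ of size $n+1$ whose classes are linearly independent \emph{and} whose intersection with $V$ is empty. The lemma does not assert the latter, and in general it is false: for $N>n$, $n+1$ linearly independent degree-$d$ classes need not have empty common zero locus in $V$ (they only have dimension $\ge n-(n+1)$, i.e.\ are \emph{allowed} to be empty, not forced to be). Fortunately this extra condition is not used in the subsequent applications of the lemma, so it should simply be dropped. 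Third, and most significantly, you dismiss part (v) as ``an elementary comparison of exponents''; in fact (v) is the genuine content of Nochka's lemma (often attributed to Chen's proof), and the choice of $R^o$ depends on an ordering of the $E_i$ combined with the precise inequalities built into the filtration defining the $\omega_i$. Writing ``since $0<\omega_i\le 1$ and $E_i\ge 1$'' is not enough, because one must control $\sum_{i\in R\setminus R^o}\omega_i\log E_i$ against $\sum_{i\in R^o}(1-\omega_i)\log E_i$, and this is exactly where the Nochka-type weight estimates along the chain $Y_0\subsetneq\cdots\subsetneq Y_s$ are needed. As a proof sketch this step is a gap, not a routine verification.
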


Let $Q$ be a hypersurface in $\P^n(\F)$ of degree $d$ defined by $\sum_{I\in\mathcal I_d}a_{I}x^I=0,$
where $\mathcal I_d=\{(i_0,\ldots,i_M)\in \mathbb N_0^{M+1}\ :\ i_0+\cdots + i_M=d\}$, $I=(i_0,\ldots,i_M)\in\mathcal I_d,$ $x^I=x_0^{i_0}\cdots x_M^{i_M}$ and $(x_0:\cdots: x_M)$ is homogeneous coordinates of $\P^M(\F)$.
Let $f$ be an non-Archimedean meromorphic map from $\F^m$ into a projective subvariety $V$ of $\P^M(\F)$ with a reduced representation $\bbf=(f_0,\ldots ,f_M)$. We define
$$ Q(\bbf)=\sum_{I\in\mathcal I_d}a_{I}f^I ,$$
where $f^I=f_0^{i_0}\cdots f_n^{i_n}$ for $I=(i_0,\ldots,i_n)$. We have the following lemma.

\begin{lemma}[{cf. \cite[Lemma 4]{AQ}}]\label{lem5}
Let $\{Q_i\}_{i\in R}$ be a set of hypersurfaces in $\P^n(\F)$ of the common degree $d$ and let $f$ be a meromorphic mapping of $\F^m$ into $\P^n(\F)$ with a reduced representation $\bbf=(f_0,\ldots ,f_M)$. Assume that $\bigcap_{i\in R}Q_i\cap V=\varnothing$. Then,  there exist positive constants $\alpha$ and $\beta$ such that
$$\alpha \|\bbf\|_r^d \le  \max_{i\in R}|Q_i(\bbf)|_r\le \beta \|\bbf\|_r^d \text{ for any }r>0.$$
\end{lemma}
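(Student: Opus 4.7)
The plan is to prove the two inequalities separately, with the upper bound being routine and the lower bound being the substantive part that draws on Hilbert's Nullstellensatz.

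For the upper bound, I would simply note that each $Q_i$ is a homogeneous polynomial of degree $d$, so $Q_i(\bbf)=\sum_{I\in\mathcal I_d}a_{i,I}\bbf^I$ and, by the ultrametric inequality combined with $|\bbf^I|_r\le\|\bbf\|_r^d$, one gets $|Q_i(\bbf)|_r\le\beta_i\|\bbf\|_r^d$ where $\beta_i=\max_{I}|a_{i,I}|$. Taking $\beta=\max_{i\in R}\beta_i$ gives $\max_i|Q_i(\bbf)|_r\le\beta\|\bbf\|_r^d$ for every $r>0$.

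For the lower bound, the key input is the hypothesis $V\cap\bigcap_{i\in R}Q_i=\varnothing$. Let $P_1,\ldots,P_t$ be generators of the homogeneous ideal $I(V)$. Then the common zero locus in $\P^M(\F)$ of $\{Q_i\}_{i\in R}\cup\{P_k\}_{k=1}^t$ is empty, so by the projective form of Hilbert's Nullstellensatz the ideal they generate contains the irrelevant ideal to some power: there exists an integer $N\ge d$ and homogeneous polynomials $A_{ij},B_{kj}\in\F[x_0,\ldots,x_M]$ (with $A_{ij}$ of degree $N-d$) such that
$$x_j^{\,N}=\sum_{i\in R}A_{ij}(x)Q_i(x)+\sum_{k=1}^{t}B_{kj}(x)P_k(x),\qquad j=0,\ldots,M.$$
Since $\bbf$ is a reduced representation of a map into $V$, we have $P_k(\bbf)\equiv 0$ for all $k$, so substituting $\bbf$ yields $f_j^{\,N}=\sum_{i\in R}A_{ij}(\bbf)Q_i(\bbf)$. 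Applying $|\cdot|_r$, using the strong triangle inequality, and bounding $|A_{ij}(\bbf)|_r\le c_{ij}\|\bbf\|_r^{N-d}$ exactly as in the upper-bound step, we get
$$|f_j|_r^{\,N}\le c\,\|\bbf\|_r^{N-d}\max_{i\in R}|Q_i(\bbf)|_r$$
for a constant $c$ independent of $j$ and $r$. Taking the max over $j$ on the left gives $\|\bbf\|_r^{\,N}\le c\,\|\bbf\|_r^{N-d}\max_i|Q_i(\bbf)|_r$, and dividing by $\|\bbf\|_r^{N-d}$ (which is nonzero since $\bbf$ is a reduced representation) produces $\alpha\|\bbf\|_r^d\le\max_i|Q_i(\bbf)|_r$ with $\alpha=1/c$.

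The only real obstacle is justifying the Nullstellensatz step; this is standard over an algebraically closed field and does not depend on the absolute value being Archimedean, so the argument goes through verbatim in the non-Archimedean setting. The constants $\alpha,\beta$ produced this way depend only on $V$ and $\{Q_i\}_{i\in R}$, which matches the requirement of the lemma.
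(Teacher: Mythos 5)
The paper states this lemma without proof, citing \cite[Lemma 4]{AQ} and asserting that the argument transfers verbatim to the non-Archimedean setting, so there is no ``paper's proof'' to compare against. Your proof is correct and self-contained. The upper bound is the direct ultrametric estimate. The lower bound via the projective Nullstellensatz --- writing $x_j^N=\sum_{i\in R}A_{ij}Q_i+\sum_k B_{kj}P_k$, substituting $\bbf$, using $P_k(\bbf)\equiv 0$ because $f$ maps into $V$, and then invoking the multiplicativity of the Gauss norm $|\cdot|_r$ on $\mathcal E_m$ --- is exactly the right mechanism here: over $\C$ one would typically argue by compactness of $V\cap\{\|x\|=1\}$, but $\P^M(\F)$ is not compact in the non-Archimedean topology, so the purely algebraic Nullstellensatz identity is the appropriate field-agnostic substitute (it is the same device used in \cite{A} and \cite{R01}). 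You also correctly flag the two small points that make the cancellation rigorous: that one may take $N\ge d$ so the cofactors $A_{ij}$ are genuine homogeneous polynomials of degree $N-d$, and that $\|\bbf\|_r>0$ for $r>0$ since a reduced representation is a nonzero tuple in $\mathcal E_m$, so division by $\|\bbf\|_r^{N-d}$ is legitimate.
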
 

\begin{lemma}[{cf. \cite[Lemma 5]{AQ}}]\label{lem6}
Let $\{Q_i\}_{i=1}^q$ be a set of $q$ hypersurfaces in $\P^M(\F)$ of the common degree $d$. Then there exist $(H_V(d)-n-1)$ hypersurfaces $\{T_i\}_{i=1}^{H_V(d)-n-1}$ in $\P^M(\F)$ such that for any subset $R\in\{1,\ldots,q\}$ with $\sharp R=\rank_{\F} \{[Q_i]; i\in R\}=n+1,$ we get $\rank_{\F}\{\{[Q_i]; i\in R\}\cup\{[T_i]; 1\le i\le H_d(V)-n-1\}\}=H_V(d).$
\end{lemma}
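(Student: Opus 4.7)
The plan is a ``generic choice'' construction: build the hypersurfaces $T_1,\ldots,T_{H_V(d)-n-1}$ one class at a time, ensuring at each step that every independent family $\{[Q_i]: i\in R\}$ appearing in the hypothesis can still be extended by adjoining the class of the new $T_j$. If $H_V(d)=n+1$ the statement is vacuous, so I would assume $H_V(d)>n+1$.

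First I would collect the finite family
$$\mathcal R := \bigl\{R \subset \{1,\ldots,q\} : \sharp R = n+1 \text{ and } \rank_\F\{[Q_i]: i\in R\} = n+1\bigr\},$$
whose cardinality is at most $\binom{q}{n+1}$, and for each $R\in\mathcal R$ set $W_0(R):=\mathrm{span}_\F\{[Q_i]: i\in R\}\subset I_d(V)$, a subspace of dimension exactly $n+1$.

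The main step is the induction. At stage $j$ ($1\le j\le H_V(d)-n-1$), assuming that $[T_1],\ldots,[T_{j-1}]$ have already been picked so that $W_{j-1}(R):=W_0(R)+\sum_{i<j}\F\,[T_i]$ has dimension $n+j$ for every $R\in\mathcal R$, I would choose $[T_j]\in I_d(V)\setminus\bigcup_{R\in\mathcal R}W_{j-1}(R)$. Each $W_{j-1}(R)$ is a proper subspace of $I_d(V)$ (dimension $n+j\le H_V(d)-1$), and $\F$, being algebraically closed, is infinite, so the union of the finitely many $W_{j-1}(R)$ cannot exhaust $I_d(V)$; thus such a class $[T_j]$ exists. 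Then $W_j(R):=W_{j-1}(R)+\F\,[T_j]$ has dimension $n+j+1$ for every $R$. Any nonzero representative $T_j\in H_d$ of $[T_j]$ defines a legitimate hypersurface in $\P^M(\F)$.

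After $H_V(d)-n-1$ such steps, $W_{H_V(d)-n-1}(R)=I_d(V)$ for every $R\in\mathcal R$, so the set $\{[Q_i]: i\in R\}\cup\{[T_i]: 1\le i\le H_V(d)-n-1\}$ is a spanning family in $I_d(V)$ of exactly $H_V(d)$ elements, hence a basis, yielding rank $H_V(d)$. The argument presents no genuine obstacle; its only two ingredients are the finiteness of $\mathcal R$ and the standard fact that a vector space over an infinite field is not covered by any finite union of proper linear subspaces.
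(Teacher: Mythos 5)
Your proof is correct and is the standard argument for this type of ``common complement'' lemma: iteratively choose each $[T_j]$ outside the finite union of proper subspaces $W_{j-1}(R)$, which is possible because an algebraically closed field is infinite and a vector space over an infinite field is never a finite union of proper subspaces. Note that the paper in fact gives no proof of Lemma~\ref{lem6} at all; it simply cites \cite{AQ} and remarks that the same proof carries over to an arbitrary complete algebraically closed field. Your write-up is essentially the proof one expects to find in \cite{AQ}, with the one genuinely needed observation for the transfer to positive characteristic made explicit, namely that the avoidance step only needs $\F$ to be infinite rather than $\F=\C$. One small point worth keeping in mind: you need the chosen representative $T_j\in H_d$ to be a nonzero polynomial so that it actually defines a hypersurface, and this is automatic because $[T_j]\notin W_{j-1}(R)\ni 0$, so $[T_j]\neq 0$ in $I_d(V)=H_d/(I(V)\cap H_d)$ and hence every representative in $H_d$ is nonzero.
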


\noindent
{\bf 2.5. Value distribution theory for non-Archimedean meromorphic maps.}\ 

Let $f : \F^m \rightarrow V\subset\P^M(\F)$ be a non-Archimedean meromorphic map with a reduced representation $\bbf = (f_0,\dots,f_N)$.
The characteristic function of $f$ is defined by 
\begin{align*}
T_f(r)= \log\|\bbf\|_r,
\end{align*}
where $\|\bbf\|_r=\max_{1\le 0\le n}|f_i|_r$. This definition is well-defined upto a constant.

Let $Q$ be a hypersurface in $\P^n(\F)$ of degree $d$ defined by $\sum_{I\in\mathcal I_d}a_{I}x^I=0,$
where $a_I\in\F \ (I\in\mathcal I_d)$ and are not all zeros. If $Q(\bbf)\not\equiv 0$ then we define the proximity function of $f$ with respect to $Q$ by
$$ m_f(Q,r)=\log\frac{\|\bbf\|^d_r\cdot\|Q\|}{|Q(\bbf)|_r},$$
where $\|Q\|:=\max_{I\in\mathcal I_d}|a_I|$. We see that the definition of $m_f(Q,r)$ does not depend on the choices of the presentations of $f$ and $Q$.

The truncated (to level $l$) counting function of $f$ with respect to $Q$ is defined by
$$ N_f^{(l)}(Q,r):=N^{(l)}_{Q(\bbf)}(0,r).$$
For simplicity, we will omit the character $^{(l)}$ if $l=\infty$.

The first main theorem for non-Archimedean meromorphic maps states that
$$ dT_f(r)=m_f(Q,r)+N_f(Q,r)+O(1).$$

\begin{proposition}[{cf. \cite[Propositions 4.3, 4.4]{Yan}}]\label{pos7}
Let $p$ be the character of $\F$. Assume that $f: \F_m\rightarrow\P^n(\F)$ is a non-Achimedean meromorphic map, which is linearly non-degenerate over $\F$, with a reduced representation $\bbf=(f_0,\ldots,f_n)$. Then there exist multi-indices $\gamma^0=(0,\ldots,0),\gamma^1,\ldots,\gamma^n$ with
$$ |\gamma^0|\le\cdots\le|\gamma^n|\le\kappa_0\le\begin{cases}
p^{s-1}(n-k+1)&\text{ if }p>0,\\
n-k+1&\text{ if }p=0
\end{cases} $$
where $s$ is the index of independence of $f$ and $k=\rank f$, such that the generalized Wronskian
$$ W_{\gamma^0,\ldots,\gamma^n}(f_0,\ldots,f_n)=\det\left(D^{\gamma^i}f_j\right)_{0\le i,j\le n}\not\equiv 0.$$
\end{proposition}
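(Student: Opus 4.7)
The plan is to construct the multi-indices $\gamma^0=(0,\ldots,0),\gamma^1,\ldots,\gamma^n$ by a greedy inductive procedure. Start with $\gamma^0=(0,\ldots,0)$, so that the first row $(f_0,\ldots,f_n)$ of the Wronskian matrix is nonzero because $f$ is linearly nondegenerate over $\F$. At stage $j$, having chosen $\gamma^0,\ldots,\gamma^{j-1}$ with $|\gamma^0|\le\cdots\le|\gamma^{j-1}|$ such that the first $j$ rows $(D^{\gamma^l} f_m)_{0\le m\le n}$ are $\mathcal M_m$-linearly independent, I would choose $\gamma^j$ of minimal $|\gamma^j|\ge|\gamma^{j-1}|$ so that the new row $(D^{\gamma^j} f_0,\ldots,D^{\gamma^j} f_n)$ falls outside the $\mathcal M_m$-span of the previously chosen rows. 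The process terminates with a nonzero generalized Wronskian at $j=n$; the heart of the argument is then to bound $|\gamma^n|$.

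For the bound, the key observation is that if no multi-index $\gamma$ of bounded order increases the rank at some stage, then $f_0,\ldots,f_n$ must satisfy a nontrivial linear relation over a Frobenius subfield $\mathcal M_m[p^t]$. To extract this, the stage hypothesis says every Hasse derivative of order below the bound lies in the $\mathcal M_m$-span of the $j$ already-chosen rows. Iterated application of the Hasse product rule $D^\alpha D^\beta=\binom{\alpha+\beta}{\beta}D^{\alpha+\beta}$ together with properties (i)--(iv) of the Hasse derivative lets one propagate these relations to higher orders. Using that elements of $\mathcal M_m[p^s]$ are annihilated by every $D^\gamma$ with $0<|\gamma|<p^s$, one extracts a genuine $\mathcal M_m[p^s]$-linear combination of the $f_m$ that vanishes. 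By Definition \ref{def3}, a nontrivial such relation would contradict the $\F$-linear nondegeneracy of $f$, since $s$ is precisely the index of nondegeneracy.

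Quantitatively, the rank $k$ controls how many of the $n+1$ rows can be secured cheaply: roughly speaking, $k-1$ increments can be made by jumping the order by at most $1$, while each of the remaining $n+1-k$ increments may require jumping by up to $p^{s-1}$ in characteristic $p>0$ (or by $1$ in characteristic $0$) before the Frobenius barrier forces a linear relation of the type just described. Summing the increments yields $|\gamma^n|\le p^{s-1}(n-k+1)$ in positive characteristic and $n-k+1$ in characteristic zero. The main obstacle is the delicate bookkeeping in the positive-characteristic case: the Hasse product formula introduces binomial coefficients $\binom{\alpha+\beta}{\beta}$ that can vanish modulo $p$ by Lucas' theorem, and one must combine this vanishing with the identity $\mathcal M_m[p^{s-1}+1]=\mathcal M_m[p^s]$ from Section 2.2 to exclude spurious dependences at intermediate Frobenius levels and land exactly at level $p^{s-1}$.
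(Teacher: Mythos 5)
The paper does not actually prove Proposition~\ref{pos7}; it is imported verbatim (``cf.\ \cite[Propositions 4.3, 4.4]{Yan}'') and no argument is given in this paper, so there is no in-paper proof to compare against. Evaluating your sketch on its own merits: the general strategy (greedy row selection for the generalized Wronskian together with a Frobenius/Hasse-derivative obstruction) is the right framework and is consistent with the way such results are proved in Yan's paper and in the Garcia--Voloch tradition. However, what you have written is a plan rather than a proof, and it has two genuine gaps.

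First, the entire difficulty of the positive-characteristic case lives in the single sentence ``if no multi-index $\gamma$ of bounded order increases the rank at some stage, then $f_0,\dots,f_n$ must satisfy a nontrivial linear relation over a Frobenius subfield $\mathcal M_m[p^t]$.'' This is precisely the lemma that needs a proof, and none is offered. The issue you flag yourself --- that $D^{e_j}D^{\gamma'}=\binom{\gamma'+e_j}{e_j}D^{\gamma'+e_j}$ and the binomial may vanish $\bmod\,p$, so rows of order $\ell$ being in the span does \emph{not} automatically force rows of order $\ell+1$ into the span --- is exactly what must be overcome to bound each jump by $p^{s-1}$. Without showing how the coefficients of the obstructing relation are upgraded from $\mathcal M_m$ to $\mathcal M_m[p^s]$ (using the definition of $s$ and the identity $\mathcal M_m[p^{s-1}+1]=\mathcal M_m[p^s]$), the bound $|\gamma^j|-|\gamma^{j-1}|\le p^{s-1}$ is unproved, and with it the whole statement.

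Second, your bookkeeping does not yield the asserted bound. You count $k-1$ increments of size at most $1$ and $n+1-k$ increments of size at most $p^{s-1}$, which sums to $(k-1)+(n+1-k)p^{s-1}$; for $k>1$ this exceeds $p^{s-1}(n-k+1)$, and in characteristic $0$ (where all jumps are $\le 1$) it gives $n$ rather than $n-k+1$. The correct accounting uses the definition $k=\rank_{\mathcal M_m}\{(D^{\gamma}f_0,\dots,D^{\gamma}f_n):|\gamma|\le 1\}-1$ to place $\gamma^0=0$ and $\gamma^1,\dots,\gamma^{k}$ all at order $\le 1$ (so $k-1$ of those increments are actually $0$, not $1$), leaving $n-k$ further jumps of size at most $p^{s-1}$, which gives $|\gamma^n|\le 1+(n-k)p^{s-1}\le p^{s-1}(n-k+1)$. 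Fixing this is easy, but as written the arithmetic does not close.
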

Here $\rank f$ is defined by
$$\rank f=\rank_{\mathcal M_m}\{(D^{\gamma}f_0,\ldots,D^{\gamma}f_n); |\gamma|\le 1\}-1.$$

\section{Proof of main theorems}

\begin{proof}[Proof of Theorem \ref{1.1}]
By replacing $Q_i$ with $Q_i^{d/d_i}$ if necessary, we may assume that all $Q_i\ (i=1,\ldots,q)$ do have the same degree $d$.
It is easy to see that there is a positive constant $\beta$ such that $\beta \|\bbf\|^d\ge |Q_i(\bbf)|$ for every $1\le i\le q.$
Set $ Q:=\{1,\cdots ,q\}$. Let $\{\omega_i\}_{i=1}^q$ be as in Lemma \ref{lem4} for the family $\{Q_i\}_{i=1}^q$.  Let $\{T_i\}_{i=1}^{H_d(V)-n-1}$ be $(H_d(V)-n-1)$ hypersurfaces in $\P^M(\F)$, which satisfy Lemma \ref{lem6}. 

Take a $\F$-basis $\{[A_i]\}_{i=1}^{H_V(d)}$ of $I_d(V)$, where $A_i\in H_d$. Since $f$ is non-degenerate over $I_d(V)$, it implies that $\{A_i(\bbf); 1\le i\le H_V(d)\}$ is linearly independent over $\F$. By Proposition \ref{pos7}, there multi-indices $\{\gamma^1=(0,\ldots,0),\gamma^2\cdots ,\gamma^{H_V(d)}\}\subset  \mathbb Z_+^m$ such that $ |\gamma^0|\le\cdots\le|\gamma^{H_d(V)}|\le\kappa_0,$ where
$$\kappa_0\le\begin{cases}
p^{s-1}(H_V(d)-k)&\text{ if }p>0,\\
H_d(V)-k&\text{ if }p=0
\end{cases}$$
and the generalized Wronskian
$$ W=\det\left(D^{\gamma^i}A_j(\bbf)\right)_{1\le i,j\le H_d(V)}\not\equiv 0.$$
Here, we note that 
\begin{align*}
k&=\rank_{\mathcal M_m}\{(D^{\gamma}f_0,\ldots,D^{\gamma}f_M); |\gamma|\le 1\}-1\\
&=\rank_{\mathcal M_m}\left\{\left(D^{\gamma}\bigl(\frac{f_1}{f_0}\bigl),\ldots,D^{\gamma}\bigl(\frac{f_M}{f_0}\bigl)\right); |\gamma|\le 1\right\}\\
&\le\rank_{\mathcal M_m}\left\{\left(D^{\gamma}\bigl(\frac{A_2(\bbf)}{A_1(\bbf)}\bigl),\ldots,D^{\gamma}\bigl(\frac{A_{H_d(V)}(\bbf)}{A_1(\bbf)}\bigl)\right); |\gamma|\le 1\right\}\\
&=\rank_{\mathcal M_m}\left\{(D^\gamma(A_1(\bbf)),\ldots,D^\gamma(A_{H_d(V)}(\bbf))); |\gamma|\le 1\right\}-1.
\end{align*}

For each $R^o=\{r^0_1,\ldots,r^0_{n+1}\}\subset\{1,\ldots,q\}$ with $\rank_{\F} \{Q_i\}_{i\in R^o}=\sharp R^o=n+1$, set 
$$W_{R^o}\equiv\det\bigl (D^{\gamma^j}Q_{r^0_v}(\bbf) (1\le v\le n+1),D^{\gamma^j}T_l(\bbf) (1\le l\le H_V(d)-n-1)\bigl )_{1\le j\le H_V(d)}.$$
Since $\rank_{\F}\{[Q_{r^0_v}] (1\le v\le n+1),[T_l] (1\le l\le H_V(d)-n-1)\}=H_V(d)$, there exists a nonzero constant  $C_{R^o}\in\F$ such that $W_{R^o}=C_{R^o}\cdot W$. 

We denote by $\mathcal R^o$ the family of all subsets $R^o$ of $\{1,\ldots,q\}$ satisfying 
$$\rank_{\F}\{[Q_i];i\in R^o\}=\sharp R^o=n+1.$$

For each $r>0$, there exists $\bar R\subset Q$ with $\sharp \bar R=N+1$ such that $|Q_{i}(\bbf)|_r\le |Q_j(\bbf)|_r,\forall i\in \bar R,j\not\in \bar R$. We choose $R^{o}\subset R$ such that $R^o\in\mathcal R^o$ and $R^o$ satisfies Lemma \ref{lem4}(v) with respect to numbers $\bigl \{\dfrac{\beta \|\bbf\|_r^d}{|Q_i(\bbf)|_r}\bigl \}_{i=1}^q$. Since $\bigcap_{i\in \bar R}Q_i=\varnothing$, by Lemma \ref{lem5}, there exists a positive constant $\alpha^{\bar R}$ such that
$$ \alpha^{\bar R} \|\bbf\|_r^d\le \max_{i\in \bar R}|Q_i(\bbf)|_r. $$
Then, we get
\begin{align*}
\dfrac{\|\bbf\|_r^{d(\sum_{i=1}^q\omega_i)}|W|_r}{|Q_1(\bbf)|_r^{\omega_1}\cdots |Q_q(\bbf)|_r^{\omega_q}}
&\le\dfrac{|W|_r}{\alpha^{q-N-1}_{\bar R}\beta^{N+1}}\prod_{i\in \bar R}\left (\dfrac{\beta\|\bbf\|_r^d}{|Q_i(\bbf)|_r}\right )^{\omega_i}\\
&\le A_{\bar R}\dfrac{|W|_r\cdot \|\bbf\|_r^{d(n+1)}}{\prod_{i\in \bar R^o}|Q_i(\bbf)|_r}\\
&\le B_{\bar R}\dfrac{|W_{\bar R^o}|_r\cdot \|\bbf\|_r^{dH_V(d)}}{\prod_{i\in \bar R^o}|Q_i(\bbf)|_r\prod_{i=1}^{H_V(d)-n-1}|T_i(\bbf)|_r},
\end{align*}
where $A_{\bar R}, B_{\bar R}$ are positive constants. 

Therefore, for every $r>0$,
\begin{align*}
\log\dfrac{\|\bbf\|_r^{d(\sum_{i=1}^q\omega_i-H_d(V)}|W|_r}{|Q_1(\bbf)|_r^{\omega_1}\cdots |Q_q(\bbf)|_r^{\omega_q}}&\le\max_{R}\log\dfrac{|W_{R}|_r}{\prod_{i\in R}|Q_i(\bbf)|_r\prod_{i=1}^{H_V(d)-n-1}|T_i(\bbf)|_r}+O(1)\\
&\le -\sum_{j=1}^{H_d(V)}|\gamma^j|\log r +O(1),
\end{align*}
where the maximum is taken over all subsets $R\subset\{1,\ldots,q\}$ such that $\sharp R=n+1$ and $\rank_{\F}\{[Q_i];i\in R\}=n+1$. Here, the last inequality comes from the lemma on logarithmic derivative. By the Poisson-Jensen-Green formula, the definitions of the approximation function and the characteristic function, we have
$$ \sum_{i=1}^q\omega_im_f(Q_i,r)-dH_d(V)T_f(r)-N_W(0,r)\le -(H_d(V)-1)\log r +O(1),$$
(note that $\sum_{i=1}^{H_d(V)}|\gamma^i|\le H_d(V)-1$). Then, by the first main theorem, we obtain
\begin{align}\label{new2}
 (\sum_{i=1}^q\omega_i-H_d(V))dT_f(r)\le\sum_{i=1}^q\omega_iN_{f}(Q_i,r)-N_W(0,r)-(H_d(V)-1)\log r+O(1).
\end{align}

\textbf{Claim.} $\sum_{i=1}^q\omega_iN_{f}(Q_i,r)-N_{W}(0,r)\le \sum_{i=1}^q\omega_iN^{(\kappa_0)}_{f}(Q_i,r)+O(1)$.

Indeed, set $\tilde G_j=\mathrm{gcd}(Q_j(\bbf),S(Q_j(\bbf))^{\kappa_0})$. Since $\omega_i\ (1\le i\le q)$ are rational numbers, there exists an integer $A$ such that $\tilde\omega_i=A\omega_i\ (1\le i\le q)$ are integers. 

Let $P\in\mathcal E_m$ be an irreducible element with $P|\prod_{i=1}^qQ_i(\bbf)^{\tilde\omega_i}$. There exists a subset $R$ of $\{1,\ldots,q\}$ with $\sharp R=N+1$ such that $P$ is not a division of $Q_i(\bbf)$ for any $i\not\in R$. Denote by $e_i$ the largest integer such that $P^{e_i}|Q_i(\bbf)$ for each $i\in R$. Then, there is a subset $R^o\subset R$ with $\sharp R^o=n+1$, $W_{R^o}\not\equiv 0$ and
$$\sum_{i\in R}\omega_i\max\{0,e_i-\kappa_0\}\le\sum_{i\in R^o}\max\{0,e_i-\kappa_0\}.$$
Also, since $W=C_{R^o}\cdot W_{R^o}$, it clear that $P$ divides $W$ with multiplicity at least
\begin{align*}
\min_{\{j_1,\ldots,j_{n+1}\}\subset\{1,\ldots,H_d(V)\}}\sum_{i\in R^0}\min\{0,e_i-|\gamma^{j_i}|\}&\ge\sum_{i\in R^0}\min\{0,e_i-\kappa_0\}\\
&\ge \sum_{i\in R}\omega_i\max\{0,e_i-\kappa_0\}\\
&=\sum_{i\in R}\omega_i(e_i-\min\{e_i,\kappa_0\}).
\end{align*}
This implies that
$$ P^{\sum_{i\in R}\tilde\omega_i e_i}|W^{A}\cdot P^{\sum_{i\in R}\tilde\omega_i\min\{e_i,\kappa_0\}}.$$
We note that $P^{\tilde\omega_i\min\{e_i,\kappa_0\}}|G_i^{\tilde\omega_i}$. Therefore,
$$ P^{\sum_{i\in R}\tilde\omega_i e_i}|W^{A}\cdot \prod_{i\in R}G_i^{\tilde\omega_i}.$$
This holds for every such irreducible element $P$. Then it yields that
$$ \prod_{i=1}^qQ_i(\bbf)^{\tilde\omega_i}|W^A\cdot\prod_{i=1}^qG_i^{\tilde\omega_i}.$$
Hence,
$$ \sum_{i=1}^qN_{f}(Q_i,r)\le N_W(0,r)+\sum_{i=1}^qN^{(\kappa_0)}_{f}(Q_i,r).$$
The claim is proved.

From the claim, Lemma \ref{lem4}(ii) and the inequality (\ref{new2}), we obtain
\begin{align*}
(\tilde\omega(q-2N+n-1)&-H_d(V)+n+1)dT_f(r)\\
&\le\sum_{i=1}^q\omega_iN^{(\kappa_0)}_{f}(Q_i,r)-(H_d(V)-1)\log r+O(1).
\end{align*}
Note that, $\omega_i\le\tilde\omega (1\le i\le q)$ and $\dfrac{n+1}{2N-n+1}\le\tilde\omega\le\dfrac{n}{N}$. Then, the above inequality implies that
$$\left (q-\frac{(2N-n+1)H_d(V)}{n+1}\right)\le\sum_{i=1}^q\dfrac{1}{d}N^{(\kappa_0)}_{f}(Q_i,r)-\dfrac{N(H_d(V)-1)}{nd}\log r+O(1).$$
The theorem is proved.
\end{proof}

\begin{proof}[Proof of Theorem \ref{1.2}]
For $r>0$, without loss of generality, we may assume that 
$$ |Q_1(\bbf)|_r^{1/\deg Q_1}\le|Q_2(\bbf)|_r^{1/\deg Q_2}\le\cdots\le |Q_q(\bbf)|_r^{1/\deg Q_{N+1}}.$$
Since $\bigcap_{i=1}^{N+1}Q_i=\varnothing$, by Lemma \ref{lem5}, there exists a positive constant $C$ such that
$$ C \|\bbf\|_r\le \max_{1\le i\le N+1}|Q_i(\bbf)|_r^{1/\deg Q_i}=|Q_{N+1}(\bbf)|_r^{1/\deg Q_{N+1}}. $$
Then, we get
\begin{align*}
\sum_{i=1}^q\dfrac{m_f(Q_i,r)}{\deg Q_i}&=\log\dfrac{\|\bbf\|_r^{q}}{|Q_1(\bbf)|_r^{1/\deg Q_1}\cdots |Q_q(\bbf)|_r^{1/\deg Q_q}}+O(1)\\
&\le\log\prod_{i=1}^N\dfrac{\|\bbf\|_r}{|Q_i(\bbf)|_r^{1/\deg Q_i}}+O(1)\\
&=\sum_{i=1}^N\dfrac{m_f(Q_i,r)}{\deg Q_i}+O(1)\\
&\le N\cdot T_f(r)+O(1).
\end{align*}
Therefore,
$$ (q-N)T_f(r)\le\sum_{i=1}^q\dfrac{1}{\deg Q_i}N_{f}(Q_i,r)+O(1)\ \ (r>0).$$
The theorem is proved.
\end{proof}


\begin{thebibliography}{99}
\setlength{\itemsep}{3pt}

\bibitem{A}T. T. H. An \textit{A defect relation for non-Archimedean analytic curves in arbitrary projective varieties}, Proc. Amer. Math. Soc. \textbf{135} (2007), 1255--1261.

\bibitem{AQ} D. P. An, S. D. Quang \textit{Second main theorem and unicity of meromorphic mappings for hypersurfaces in projective varieties}, Acta Math. Vietnamica \textbf{42} (2017), 455--470.

\bibitem{CY} W. Cherry and Z. Ye, \textit{Non-Archimedean Nevanlinna theory in several variables and the non-Archimedean Nevanlinna inverse problem}, Trans. Amer. Math. Soc. \textbf{349} (1997), 5043--5071.

\bibitem{Noc83} E. I. Nochka, \textit{On the theory of meromorphic functions}, Sov. Math. Dokl. {\bf 27} (1983), 377--381.

\bibitem{R01} M. Ru, \textit{A note on p-adic Nevanlinna theory}, Proc. Amer. Math. Soc. \textbf{129} (2001), 1263--1269.

\bibitem{Yan} Q. Yan,\textit{Truncated second main theorems and uniqueness theorems for non-Archimedean meromorphic maps}, Ann. Polon. Math. \textbf{119} (2017), 165--193

\end{thebibliography}
\end{document}